\documentclass{article}
%%%%%%%%%%%%%%%%%%%%%%%%%%%%%%%%%%%%%%%%%%%%%%%%%%%%%%%%%%%%%%%%%%%%%%%%%%%%%%%%%%%%%%%%%%%%%%%%%%%%%%%%%%%%%%%%%%%%%%%%%%%%%%%%%%%%%%%%%%%%%%%%%%%%%%%%%%%%%%%%%%%%%%%%%%%%%%%%%%%%%%%%%%%%%%%%%%%%%%%%%%%%%%%%%%%%%%%%%%%%%%%%%%%%%%%%%%%%%%%%%%%%%%%%%%%%
\usepackage{amsfonts}
\usepackage{hyperref}

%TCIDATA{OutputFilter=LATEX.DLL}
%TCIDATA{Version=5.00.0.2552}
%TCIDATA{<META NAME="SaveForMode" CONTENT="1">}
%TCIDATA{Created=Tuesday, November 29, 2022 10:56:56}
%TCIDATA{LastRevised=Thursday, September 14, 2023 12:35:32}
%TCIDATA{<META NAME="GraphicsSave" CONTENT="32">}
%TCIDATA{<META NAME="DocumentShell" CONTENT="Articles\SW\Standard LaTeX Article (hyperref)">}
%TCIDATA{CSTFile=40 LaTeX article.cst}

\newtheorem{theorem}{Theorem}

\newenvironment{proof}[1][Proof]{\noindent\textbf{#1.} }{\ \rule{0.5em}{0.5em}}
\input{tcilatex}

\begin{document}

\title{A note on the Jacobi stability of dynamical systems via Lagrange
geometry and KCC theory}
\author{Mircea Neagu\thanks{%
Transilvania University of Bra\c{s}ov, Romania.} \ and Elena Ovsiyuk\thanks{%
Mozyr State Pedagogical University named after I.P. Shamyakin, Belarus.}}
\date{}
\maketitle

\begin{abstract}
In this paper, via the least squares variational method, we develop the
Lagrange geometry (in the sense of nonlinear connection, $d-$torsions and
the deviation curvature tensor) and the KCC theory for a given dynamical
system. Further, a result on the Jacobi stability of the solutions of this
dynamical system is also obtained.
\end{abstract}

\textbf{Mathematics Subject Classification (2020):} 37J06, 70H40.

\textbf{Key words and phrases}: dynamical system, least squares Lagrangian,
nonlinear connection, $d-$torsions, deviation curvature tensor.

\section{Lagrange geometry and KCC theory for a given dynamical system}

Let $M$ be a $n$-dimensional smooth manifold, whose coordinates are $\left(
x^{i}\right) _{i=\overline{1,n}}.$ Let $TM$ be the tangent bundle, whose
coordinates are $\left( x^{i},y^{i}\right) _{i=\overline{1,n}}$, and let us
consider a vector field $X=\left( X^{i}(x)\right) _{i=\overline{1,n}}$ on $M$%
, which produces the dynamical system%
\begin{equation}
\frac{dx^{i}}{dt}=X^{i}(x(t)),\quad i=\overline{1,n}.  \label{DS}
\end{equation}

It is obvious that the solutions of class $C^{2}$ of the dynamical system (%
\ref{DS}) are the global minimum points for the \textit{least squares
Lagrangian} $L:TM\rightarrow \mathbb{R}$, given by (see Balan-Neagu \cite%
{Bal-Nea})%
\begin{equation}
L(x,y)=\delta _{ij}\left( y^{i}-X^{i}(x)\right) \left( y^{j}-X^{j}(x)\right)
.  \label{LSL}
\end{equation}%
The Euler-Lagrange equations of (\ref{LSL}) are expressed by ($i=\overline{%
1,n}$)%
\begin{equation}
\frac{\partial L}{\partial x^{i}}-\frac{d}{dt}\left( \frac{\partial L}{%
\partial y^{i}}\right) =0\Leftrightarrow \frac{d^{2}x^{i}}{dt^{2}}%
+2G^{i}(x,y)=0,  \label{E-L}
\end{equation}%
where $y^{i}=dx^{i}/dt$ and%
\[
G^{i}(x,y)=\frac{1}{4}\left( \frac{\partial ^{2}L}{\partial x^{j}\partial
y^{i}}y^{j}-\frac{\partial L}{\partial x^{i}}\right) =-\frac{1}{2}\left[
\left( \frac{\partial X^{i}}{\partial x^{j}}-\frac{\partial X^{j}}{\partial
x^{i}}\right) y^{j}+\frac{\partial X^{j}}{\partial x^{i}}X^{j}\right] 
\]%
is endowed with the geometrical meaning of \textit{semispray} of $L$. The
preceding semispray allows us to construct a whole collection of Lagrangian
geometrical objects (such as nonlinear connection and $d-$torsions) that
characterize the initial dynamical system (\ref{DS}) and that can be studied
further via the Kosambi-Cartan-Chern (KCC) theory for SODEs. For more
details on these topics, see Miron-Anastasiei \cite{Mir-An}, Udri\c{s}%
te-Neagu works \cite{Udr} and \cite{Nea-Udr}, B\"{o}hmer et al. \cite{Bohmer}
and Buc\u{a}taru-Miron \cite[pp. 71-72]{Buc-Mir}.

It is important to note that the above Lagrange geometry produced by the
Lagrangian (\ref{LSL}) is exposed in details in the monograph \cite[pp.
129-133]{Bal-Nea}. If we use the notation $J=\left( \partial X^{i}/\partial
x^{j}\right) _{i,j=\overline{1,n}}$ for the Jacobian matrix of $X$, then
this Lagrange geometry is achieved via the nonzero geometrical objects:

\begin{itemize}
\item $N=\left( N_{j}^{i}\right) _{i,j=\overline{1,n}}=-\dfrac{1}{2}\left[
J-J^{t}\right] $ is the \textit{Lagrangian nonlinear connection}, where%
\[
N_{j}^{i}=\frac{\partial G^{i}}{\partial y^{j}}=-\frac{1}{2}\left( \frac{%
\partial X^{i}}{\partial x^{j}}-\frac{\partial X^{j}}{\partial x^{i}}\right)
; 
\]

\item $R_{k}=\left( R_{jk}^{i}\right) _{i,j=\overline{1,n}}=\dfrac{\partial N%
}{\partial x^{k}},$ $\forall $ $k=\overline{1,n},$ are the \textit{%
Lagrangian }$d-$\textit{torsions}, where%
\[
R_{jk}^{i}={\frac{\delta N_{j}^{i}}{\delta x^{k}}}-\frac{\delta N_{k}^{i}}{%
\delta x^{j}},\quad {\frac{\delta }{\delta x^{k}}={\frac{\partial }{\partial
x^{k}}}-N_{k}^{r}{\frac{\partial }{\partial y^{r}};}} 
\]

\item $P=\left( P_{j}^{i}\right) _{i,j=\overline{1,n}}=R_{k}y^{k}+\mathcal{E}
$ is the \textit{deviation curvature tensor }which is given by the formula%
\[
P_{j}^{i}=-2\frac{\partial G^{i}}{\partial x^{j}}-2G^{l}\frac{\partial
N_{j}^{i}}{\partial y^{l}}+\frac{\partial N_{j}^{i}}{\partial x^{l}}%
y^{l}+N_{l}^{i}N_{j}^{l}=R_{jk}^{i}y^{k}+\frac{\delta \mathcal{E}^{i}}{%
\delta x^{j}}, 
\]%
where%
\[
\mathcal{E}^{i}=2G^{i}-N_{j}^{i}y^{j}=-\frac{1}{2}\left( \frac{\partial X^{i}%
}{\partial x^{j}}-\frac{\partial X^{j}}{\partial x^{i}}\right) y^{j}-\frac{%
\partial X^{j}}{\partial x^{i}}X^{j} 
\]%
is the\textit{\ first invariant of the semispray} of the Lagrangian (\ref%
{LSL}). Here we have%
\[
\mathcal{E=}\left( \frac{\delta \mathcal{E}^{i}}{\delta x^{j}}\right) _{i,j=%
\overline{1,n}}, 
\]%
where%
\[
\frac{\delta \mathcal{E}^{i}}{\delta x^{j}}=-\frac{1}{2}\left( \frac{%
\partial ^{2}X^{i}}{\partial x^{j}\partial x^{k}}-\frac{\partial ^{2}X^{k}}{%
\partial x^{i}\partial x^{j}}\right) y^{k}-\frac{\partial ^{2}X^{k}}{%
\partial x^{i}\partial x^{j}}X^{k}-\frac{\partial X^{k}}{\partial x^{i}}%
\frac{\partial X^{k}}{\partial x^{j}}- 
\]%
\[
-\frac{1}{4}\left( \frac{\partial X^{k}}{\partial x^{j}}-\frac{\partial X^{j}%
}{\partial x^{k}}\right) \left( \frac{\partial X^{i}}{\partial x^{k}}-\frac{%
\partial X^{k}}{\partial x^{i}}\right) . 
\]
\end{itemize}

In the background of KCC theory from the paper \cite[pp. 10-12]{Bohmer} we
note that the solutions of the Euler-Lagrange equations (\ref{E-L}) are
Jacobi stable iff the real parts of the eigenvalues of the deviation tensor $%
P$ are strictly negative everywhere, and Jacobi unstable, otherwise. The
Jacobi stability or instability has the geometrical meaning that the
trajectories of the Euler-Lagrange equations (\ref{E-L}) are bunching
together or are dispersing. As a consequence of all above we infer

\begin{theorem}
If the dimension $n\geq 2$ is odd and the matrix $\mathcal{E}$ is
skew-symmetric, then the dynamical system (\ref{DS}) is Jacobi unstable.
\end{theorem}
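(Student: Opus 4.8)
The plan is to show that, under the two hypotheses, the deviation curvature tensor $P$ is, at every point and along every solution, a real skew-symmetric matrix, and then to use the elementary fact that a real skew-symmetric matrix of odd order is singular.

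First I would check that the Lagrangian nonlinear connection $N=-\frac{1}{2}\left[J-J^{t}\right]$ is skew-symmetric as a matrix-valued function of $x$; this is immediate from the displayed formula $N_{j}^{i}=-\frac{1}{2}\left(\partial X^{i}/\partial x^{j}-\partial X^{j}/\partial x^{i}\right)$, which changes sign under $i\leftrightarrow j$. Differentiating entrywise with respect to $x^{k}$ preserves skew-symmetry, so each $d$-torsion matrix $R_{k}=\partial N/\partial x^{k}$ is skew-symmetric, and hence so is the contraction $R_{k}y^{k}=\sum_{k}y^{k}R_{k}$, being a real linear combination of skew-symmetric matrices. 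Invoking the structural identity $P=R_{k}y^{k}+\mathcal{E}$ recorded before the statement, together with the hypothesis that $\mathcal{E}$ is skew-symmetric, I conclude that $P$ is skew-symmetric everywhere.

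Next, since $n$ is odd, any real skew-symmetric $n\times n$ matrix $A$ satisfies $\det A=\det A^{t}=\det(-A)=(-1)^{n}\det A=-\det A$, whence $\det A=0$. Applying this to $A=P$ gives $\det P=0$, so $\lambda=0$ is an eigenvalue of $P$ at every point, and its real part is $0$, which is not strictly negative. Consequently the criterion recalled just before the statement — Jacobi stability of the solutions of (\ref{E-L}) is equivalent to all eigenvalues of $P$ having strictly negative real part everywhere — fails, so those solutions are Jacobi unstable; since every $C^{2}$ solution of (\ref{DS}) is a global minimizer of (\ref{LSL}) and hence satisfies (\ref{E-L}), the dynamical system (\ref{DS}) is Jacobi unstable.

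I do not expect a genuine obstacle: the argument is entirely structural. The points worth being careful about are that one should deliberately avoid the long coordinate expression for $\delta\mathcal{E}^{i}/\delta x^{j}$ — the abstract decomposition $P=R_{k}y^{k}+\mathcal{E}$ with both summands skew-symmetric is what does the work — and that the odd dimension is used precisely, and only, to force $0$ into the spectrum of the skew-symmetric matrix $P$. (In fact skew-symmetry of $P$ already makes all eigenvalues purely imaginary, so the conclusion would persist for even $n$ as well; the hypothesis $n\geq 2$ odd merely yields the cleanest, determinant-based argument.)
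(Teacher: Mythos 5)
Your proof is correct and follows essentially the same route as the paper: skew-symmetry of $N$, hence of the $d$-torsion matrices $R_{k}=\partial N/\partial x^{k}$ and of the combination $R_{k}y^{k}$, together with the hypothesis on $\mathcal{E}$ and the decomposition $P=R_{k}y^{k}+\mathcal{E}$, makes $P$ skew-symmetric, and odd dimension forces $\det P=0$, so the eigenvalue $\lambda=0$ violates the strict-negativity criterion for Jacobi stability. Your parenthetical observation that skew-symmetry alone already places the spectrum of $P$ on the imaginary axis (so the conclusion would hold for even $n$ too) is a correct strengthening that the paper does not make.
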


\begin{proof}
If the matrix $\mathcal{E}$ is skew-symmetric it follows that the deviation
tensor matrix $P$ is also skew-symmetric because the Lagrangian $d-$torsion
matrices are skew-symmetric (cf. above formulas). Consequently, the
condition of odd dimensionality implies that the matrix $P$ has its
determinant equal to zero. In other words, the value $\lambda =0$ is an
eigenvalue for the deviation tensor matrix $P.$

It is obvious now that we obtain what we were looking for.
\end{proof}

\end{document}